\documentclass[12pt,a4paper]{article}
\usepackage{amsfonts,amssymb,amsmath,graphicx}
\usepackage{theorem} 
\usepackage{bm}      
\usepackage{times}   
\usepackage{eucal}   
\usepackage[english]{babel}

\sloppy

\textwidth16.5cm \textheight23cm \topmargin-0.2cm \oddsidemargin-0.2cm
\parindent0ex
\parskip 0.3ex plus 0.1ex minus 0.1ex
\newcommand{\engschreiben}{\itemsep-0.05em plus 0.05em \parsep0em plus 0.05em}

  {\theorembodyfont{\rm}
  \newtheorem{defi}{Definition}[section]
  
  \newtheorem{exa}[defi]{Example}
  \newtheorem{exas}[defi]{Examples} }

  {\theorembodyfont{\it} 
  \newtheorem{lem}[defi]{Lemma}
  
  \newtheorem{thm}[defi]{Theorem}
  \newtheorem{cor}[defi]{Corollary}  }
{\theorembodyfont{\rm}\theoremstyle{change}\newtheorem{nrtxt}[defi]{\hspace*{-0.55ex}}
\newenvironment{txt}{\begin{trivlist}\item}{\end{trivlist}}



\newlength{\boxzeichenlaenge}
\settowidth{\boxzeichenlaenge}{~$Box$}
\newenvironment{proof}%
    {\begin{trivlist} \item {\it Proof.}}%
    {\hfill\rule{0ex}{0.1ex} \hphantom{~$\Box$}\hspace{-\boxzeichenlaenge}~\hfill\mbox{$\Box$}\end{trivlist}}






\let\phi=\varphi

\let\theta=\vartheta


\newcommand{\bF}{{\mathbb F}}


\newcommand{\cB}{{\mathcal B}}
\newcommand{\cC}{{\mathcal C}}
\newcommand{\cD}{{\mathcal D}}

\newcommand{\cF}{{\mathcal F}}

\newcommand{\cR}{{\mathcal R}}

\newcommand{\cV}{{\mathcal V}}



\DeclareMathOperator{\AG}{AG}

\DeclareMathOperator{\Aut}{Aut}
\DeclareMathOperator{\diag}{diag}

 \DeclareMathOperator{\GF}{GF}
\DeclareMathOperator{\GL}{GL}
\DeclareMathOperator{\GaL}{{\Gamma}L}

\DeclareMathOperator{\id}{id}

\DeclareMathOperator{\PG}{PG}






\newcommand{\rel}{\mathrel{\cR}}
\newcommand{\qrel}{\mathrel{\overline\cR}}
\newcommand{\card}{\mathbin{\#}}
\newcommand{\Matrixfeld}[4]{\left#1\!\begin{array}{*{#3}{c}}#4\end{array}\!\right#2}
\newcommand{\SMatrixfeld}[4]{\hbox{\scriptsize\arraycolsep=.5\arraycolsep
  $\left#1\!\begin{array}{*{#3}{c}}#4\end{array}\!\right#2$}}

\newcommand{\Mat}{\Matrixfeld()}
\newcommand{\SMat}{\SMatrixfeld()}

\newcommand{\tsk}{$t$-$(s,k,\lambda_t)$}

\begin{document}

\title{Lifting of divisible designs}
\author{Andrea Blunck \and Hans Havlicek \and Corrado Zanella}
\date{}

\maketitle

\centerline{\emph{Dedicated to Walter Benz on the occasion of his 75th
birthday}}

\begin{abstract}\noindent
The aim of this paper is to present a construction of $t$-divisible designs for
$t>3$, because such divisible designs seem to be missing in the literature. To
this end, tools such as finite projective spaces and their algebraic varieties
are employed. More precisely, in a first step an abstract construction, called
$t$-lifting, is developed. It starts from a set $X$ containing a $t$-divisible
design and a group $G$ acting on $X$. Then several explicit examples are given,
where $X$ is a subset of $\PG(n,q)$ and $G$ is a subgroup of $\GL_{n+1}(q)$. In
some cases $X$ is obtained from a cone with a Veronesean or an $h$-sphere as
its basis. In other examples $X$ arises from a projective embedding of a Witt
design. As a result, for any integer $t\geq 2$ infinitely many non-isomorphic
$t$-divisible designs are found.

\par~\par\noindent
2000 Mathematics Subject Classification. 05B30, 51E20, 20B25.

\par\noindent
Key words: divisible design, finite projective space, Veronese variety.
\end{abstract}

\section{Introduction}

\begin{nrtxt}
This paper is concerned with the construction $t$-divisible designs; see
Definition \ref{def:DD}. We shall frequently use the shorthand ``DD'' for
``divisible design''. A well known construction of a $t$-DD is due to
A.~G.~Spera \cite[Proposition~4.6]{spera-92a}. It uses a finite set $X$ of
points which is endowed with an equivalence relation $\rel$, a group $G$ acting
on $X$, and a subset $B$ of $X$ called the `base block'. Then, under certain
conditions, the action of $G$ on $X$ gives rise to a $t$-divisible design with
point set $X$, equivalence relation $\rel$, and the $G$-orbit of $B$ as set of
blocks. If all equivalence classes are singletons then Spera's construction
turns into a construction of $t$-designs due to D.~R.~Hughes \cite[Theorem
3.4]{hugh-64a}.
\par
C.~Cerroni, S.~Giese, R.~H.~Schulz, A.~G.~Spera, and others successfully made
use of Spera's construction and obtained examples of $2$- and $3$-DDs. See
\cite{cerr-02a}, \cite{cerr+sch-00a}, \cite{cerr+sch-01a}, \cite{cerr+sp-99},
\cite{giese-05a}, \cite{giese+h+s-05a}, \cite{schulz+s-98a},
\cite{schulz+s-98b}, \cite{spera-95}, and \cite{spera-00a}. We refer also to
\cite[3.1]{giese-05a} for a detailed survey. It seems, however, that no
examples of $t$-DDs for $t>3$ were constructed in this way.
\end{nrtxt}

\begin{nrtxt}
One of the results in the thesis of S.~Giese is a construction of a $2$-DD
which it is called ``Konstruktion (A)'' in \cite[p.~64]{giese-05a}: It starts
with a given $2$-DD, say $\cD$, a finite projective space $\PG(n+1,q)$ with a
distinguished hyperplane $H=\PG(n,q)$ and a distinguished point
$O\in\PG(n+1,q)\setminus H$, called the \emph{origin}. Assuming that the
dimension $n$ and the prime power $q$ are sufficiently large, the point set of
the given $2$-DD can be mapped bijectively onto a set of $n-1$-spaces of $H$
subject to certain technical properties. Then each of these subspaces is joined
with the origin. This gives an isomorphic copy of the given $2$-DD whose
``point set'' consists of hyperplanes of $\PG(n+1,q)$ through the origin. Then
a new $2$-DD, say $\cD'$, can be obtained from the action of the translation
group (with respect to $H$) on this model of the given $2$-DD. See
\cite[Satz~3.2.4]{giese-05a}. Consequently, the ``points'' of $\cD'$ are also
hyperplanes of $\PG(n+1,q)$, but not all through the origin. It turns out that
this construction can be repeated by embedding $\PG(n+1,q)$ as a hyperplane in
$\PG(n+2,q)$, choosing a new origin in $\PG(n+2,q)\setminus\PG(n+1,q)$, and so
on. In this way infinite series of $2$-DDs can be obtained from any given
$2$-DD.

Of course, there is also the possibility to start the construction of Giese
when $\cD$ is a $t$-DD ($t\geq 2$), since such a structure is also a $2$-DD. In
\cite[Lemma~3.2.18]{giese-05a} necessary and sufficient conditions are given
for $\cD'$ to be a $t$-DD. However, those conditions are in terms of the new
structure $\cD'$ rather than the initial structure $\cD$, whence they cannot be
checked at the very beginning.
\end{nrtxt}

\begin{nrtxt}
The aim of the present note is to present a construction of a $t$-DD which
generalizes the ideas from \cite{giese-05a}. We start with an abstract group
acting $G$ on some set $X$, and a $t$-DD embedded in $X$. Then, under certain
conditions which can be read off from Theorem \ref{thm:lifting}, a new $t$-DD
is obtained via the action of $G$ on $X$. This process will be called a
\emph{$t$-lifting}.

Several explicit examples for $t$-liftings are presented in Section
\ref{se:geometric}. We choose $X$ to be a cone (without its vertex) in a finite
projective space $\PG(n,q)$, and $G$ to be a certain group of matrices. This
approach is still very general, since there are many possibilities for $X$. In
particular, when the base of the cone is chosen to be a Veronese variety,
infinitely many non-isomorphic $t$-divisible designs can be found for any
$t\geq 2$; see Theorem \ref{thm:unendlich.viele}. The construction of Giese,
even after a finite number of iterations, is just a particular case of our
construction of a $2$-lifting in a finite projective space. However, in order
to get Giese's results in their original form, one has to adopt a dual point of
view. Cf.\ the remarks in \ref{:bemerkungen}.
\end{nrtxt}

\section{Construction of $t$-liftings}

\begin{nrtxt}
Assume that $X$ is a finite set of \emph{points}, endowed with an equivalence
relation ${\rel}$; its equivalence classes are called \emph{point classes}. A
subset $Y$ of $X$ is called \emph{$\rel$-transversal} if for each point class
$C$ we have $\#(C\cap Y)\leq 1$. Let us recall the following:
\end{nrtxt}

\begin{defi}\label{def:DD}
A triple $\mathcal{D}=(X,\cB,\rel)$ is called a \tsk-\emph{divisible design} if
there exist positive integers $t,s,k,\lambda_t$ such that the following axioms
hold:

\begin{itemize}
\engschreiben

\item[(A)] $\mathcal{B}$ is a set of $\rel$-transversal subsets of $X$, called \emph{blocks}, with
$\card B=k$ for all  $B \in\mathcal{B}$.

\item[(B)] Each point class has size $s$.

\item[(C)] For each $\rel$-transversal $t$-subset $Y\subset X$ there exist
exactly $\lambda_t$ blocks containing $Y$.

\item[(D)] $t\leq \frac{v}{s}$, where $v:=\card X$.
\end{itemize}
\end{defi}
\begin{txt}
Observe that (D) is necessary to avoid the trivial case where no
$\rel$-transversal $t$-subset exists.
\end{txt}

\begin{nrtxt}
Sometimes we shall speak of a $t$-DD without explicitly mentioning the
remaining \emph{parameters} $s$, $k$, and $\lambda_t$. According to our
definition, a block is merely a subset of $X$. Hence the DDs which we are going
to discuss are \emph{simple}, i.e., we do not take into account the possibility
of ``repeated blocks''. Cf.\ \cite[p.~2]{beth+j+l-99a} for that concept.

A divisible design with $s=1$ is called a \emph{design}; we refer to the two
volumes \cite{beth+j+l-99a} and \cite{beth+j+l-99b}. In design theory the
parameter $s$ is not taken into account, and a $t$-$(1,k,\lambda_t)$-DD with
$v$ points is often called a $t$-$(v,k,\lambda_t)$-design.
\end{nrtxt}

\begin{nrtxt}
One possibility to construct divisible designs is given by the following
theorem. The ingredients for this construction are a finite set $X$, a finite
group $G$ acting on $X$, and a so-called \emph{base divisible design}, say
$(\overline X,\overline\cB,\qrel)$. Its orbit under the action of $G$ will then
yield a DD. More precisely, we can show the following:
\end{nrtxt}

\begin{thm}[$t$-Lifting]\label{thm:lifting}
Let $X$ be a finite set, let $t$ be a fixed positive integer, let $(\overline
X,\overline \cB,\qrel )$, where $\overline X\subset X$, be a $t$-$(\overline
s,k,\overline\lambda_t)$-divisible design, and let $G$ be a group acting on
$X$. Suppose, furthermore, that the following properties hold:
\begin{enumerate}\engschreiben

    \item
For each $x\in X$ there is a unique element of\/ $\overline X$, say $\widehat
x$, such that $x^G=\widehat x^G$.

    \item
All orbits $\overline x{}^G$, where $\overline x\in \overline X$, have the same
cardinality.
    \item
Given any subset $Y=\{y_1,y_2,\ldots,y_t\}$ of $X$, for which $\widehat
Y:=\{\widehat y_1,\widehat y_2,\ldots,\widehat y_t\}$ is an $\qrel
$-transversal $t$-subset of\/ $\overline X$, there exists at least one $g\in G$
such that $Y^g=\widehat Y$.
    \item
All setwise stabilizers $G_{\overline Y}$, where $\overline Y\subset \overline
X$ is any $\qrel$-transversal $t$-subset, have the same cardinality.
    \item
All setwise stabilizers $G_{\overline B}$, where $\overline B\in \overline \cB$
is any block, have the same cardinality.
\end{enumerate}
Then $(X,\cB,\rel )$ with
\begin{equation}\label{eq:B+R}
    \cB:= \overline\cB{}^G=\{\overline B{}^g
    \mid
    \overline B\in\overline \cB, g\in G\},\;\;
    {\rel}:=\{(x,x')\in X\times X \mid (\widehat x, \widehat x')\in{\qrel} \},
\end{equation}
is a \tsk-divisible design, where
\begin{equation}\label{eq:DD-lambda}
  s=(\card\overline x{}^G)\overline s,\;\;\lambda_t:=\overline\lambda_t \,
  \frac{\card G_{\overline Y}}{\card G_{\overline B}}
\end{equation}
with arbitrary $\overline x$, $\overline Y$, and $\overline B$ as above.
\end{thm}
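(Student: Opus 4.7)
The plan is to verify the four axioms of Definition~\ref{def:DD} for $(X,\cB,\rel)$ one by one, resting throughout on a single preliminary observation: for every $x\in X$ and $g\in G$ one has $\widehat{x^g}=\widehat x$, because $(x^g)^G=x^G=\widehat x{}^G$ and the uniqueness clause in~(a) pins $\widehat{x^g}$ down. Consequently $G$ preserves $\widehat{\,\cdot\,}$ and hence the relation $\rel$, which is then easily seen to be an equivalence. For~(B), I would decompose the $\rel$-class of $x$ as the disjoint union, over the $\overline s$ points $\overline x{}'$ in the $\qrel$-class of $\widehat x$, of the fibres $\{x'\in X:\widehat{x'}=\overline x{}'\}$; by uniqueness each fibre equals the orbit $\overline x{}'{}^G$, and by~(b) all such orbits have the same cardinality, yielding $s=\overline s\cdot|\overline x{}^G|$. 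For~(A), bijectivity of $g$ gives $|\overline B{}^g|=k$, and distinct $\overline x{}_1,\overline x{}_2\in\overline B$ lie in different $\qrel$-classes, so their images have hats $\overline x{}_1,\overline x{}_2$ in different $\qrel$-classes and $\overline B{}^g$ is $\rel$-transversal.

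The main step is axiom~(C). First I would reduce to $Y\subset\overline X$: given any $\rel$-transversal $t$-subset $Y\subset X$, the set $\widehat Y$ is a $\qrel$-transversal $t$-subset of $\overline X$, so~(c) provides $g_0\in G$ with $Y^{g_0}=\widehat Y$, and since $G$ preserves $\cB$ the numbers of blocks through $Y$ and through $\widehat Y$ agree. Then I would double count
\[
S:=\{(\overline B,g)\in\overline\cB\times G\mid Y^g\subset\overline B\}.
\]
The whole calculation will rest on two ``rigidity'' lemmas, each a pointwise application of the same uniqueness argument: (i)~if $Y^g\subset\overline X$ then each $y\in Y$ satisfies $y^g=\widehat{y^g}=\widehat y=y$, so $Y^g=Y$; (ii)~if $\overline B{}^g\in\overline\cB$ then analogously $\overline B{}^g=\overline B$, so each $G$-orbit meets $\overline\cB$ in at most one element.

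Lemma~(i) combined with the counting axiom for $\overline\cD$ will yield the row sum $|S|=\overline\lambda_t\,|G_Y|$. Lemma~(ii) shows that the map $(\overline B,g)\mapsto\overline B{}^{g^{-1}}$ from $S$ onto the blocks through $Y$ has, above each $B=\overline B_0^h$, the fibre $\{(\overline B_0,g):g\in h^{-1}G_{\overline B_0}\}$; by~(e) this fibre has uniform size $|G_{\overline B}|$, so the column sum is $|S|=|G_{\overline B}|\cdot\#\{B\in\cB:Y\subset B\}$. Equating the two and invoking~(d) to identify $|G_Y|$ with $|G_{\overline Y}|$ delivers $\lambda_t=\overline\lambda_t\,|G_{\overline Y}|/|G_{\overline B}|$. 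Axiom~(D) is then immediate, since $\widehat{\,\cdot\,}$ induces a bijection between $\rel$-classes of $X$ and $\qrel$-classes of $\overline X$, giving $v/s=\overline v/\overline s\geq t$. The main obstacle, I expect, is spotting the rigidity lemmas~(i)--(ii); without them the orbit structure of $G$ on $\cB$ would not decouple cleanly from the count, and the formula for $\lambda_t$ would pick up spurious factors depending on how $\overline\cB$ meets the $G$-orbits in $\cB$.
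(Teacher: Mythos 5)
Your proposal is correct and is essentially the paper's own argument: your rigidity lemmas (i)--(ii) are precisely the paper's key observation (\ref{eq:schnitt}) that the points of $\overline X$ are the unique orbit representatives (so any $g$ carrying a subset of $\overline X$ back into $\overline X$ fixes it pointwise), and your reduction of an arbitrary $\rel$-transversal $t$-set $Y$ to $\widehat Y$ via hypothesis (c), as well as the verification of the block and point-class axioms, match the paper step for step. The only difference is organisational: where the paper obtains $\lambda_t$ by counting, inside the orbit of each of the $\overline\lambda_t$ blocks $\overline B$ through $\overline Y$, the $\card G_{\overline Y}/\card G_{\overline B}$ translates through $\overline Y$ and then invoking disjointness of distinct block orbits, you package the same orbit--stabilizer bookkeeping as a double count of the incidence set $S$, which yields the identical formula.
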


\begin{proof}
It is clear from (a) that $\rel$ is a well-defined equivalence relation. Due to
(a) and (b), all its equivalence classes have cardinality $(\card\overline
x{}^G)\overline s$, where $\overline x\in\overline X$ can be chosen
arbitrarily. This establishes the first equation in (\ref{eq:DD-lambda}).

Next, we show that
\begin{equation}\label{eq:schnitt}
  \forall\,\overline Z\subset\overline X,\; \forall\, g\in G,\mbox{ and }
  \forall\, \overline x\in \overline Z\cap\overline Z{}^g :
  \overline x{}^g=\overline x.
\end{equation}
To prove this assertion consider $\overline z:=\overline x{}^{g^{-1}}$. From
$\overline x\in\overline Z{}^g$ follows $\overline z\in\overline Z\subset
\overline X$, whence (a) yields $\overline z\in \overline x{}^G\cap\overline X
= \{\overline x\}$. Thus $\overline z=\overline x$ which of course means
$\overline x{}^g=\overline x$.

Now let $\overline Y$ be an $\qrel $-transversal $t$-subset of $\overline X$.
Denote by $\overline B$ one of the $\overline\lambda_t\geq 1$ blocks of the DD
$(\overline X,\overline \cB,\qrel )$ containing the point set $\overline Y$. We
claim that
\begin{equation}\label{eq:durch-Y}
\forall\,g\in G :
   \overline Y\subset \overline B{}^g
   \Leftrightarrow
   g\in G_{\overline Y}.
\end{equation}
If $\overline Y\subset \overline B{}^g$ then $\overline Y\subset \overline B
\cap \overline B{}^g$. We infer from (\ref{eq:schnitt}), applied to $\overline
B\subset\overline X$, that all elements of $\overline B\cap\overline B{}^g$
remain fixed under the action of $g$, whence $g\in G_{\overline Y}$; the
converse is trivial. Next we describe the stabilizer of the subset $\overline
B$ in the subgroup $G_{\overline Y}$. Taking into account that all our
stabilizers are in fact pointwise stabilizers we read off from $\overline
Y\subset\overline B$ that $G_{\overline B} \subset G_{\overline Y}$. This shows
\begin{equation}\label{eq:stabil-B-Y}
     G_{\overline Y}\cap G_{\overline B} = G_{\overline B}.
\end{equation}
By combining (\ref{eq:durch-Y}) with (\ref{eq:stabil-B-Y}) we see that the
orbit $\overline B{}^G$ contains precisely $({\card G_{\overline Y}}) / ({\card
G_{\overline B}})$ distinct subsets $\overline B{}^g$ passing through
$\overline Y$.
\par
If $\overline B{}'\neq \overline B$ is another block of $(\overline X,\overline
\cB,\qrel )$ through $\overline Y$ then, by $\card\overline B = \card \overline
B{}'$, there are elements $\overline x\in\overline B\setminus\overline B{}'$
and $\overline x{}'\in\overline B{}'\setminus\overline B$. As the $G$-orbits of
$\overline x$ and $\overline x{}'$ are disjoint due to (a), so are the
$G$-orbits of $\overline B$ and $\overline B{}'$. Consequently, the number of
blocks in $\cB$ containing $\overline Y$ equals the integer $\lambda_t$ as
defined in (\ref{eq:DD-lambda}).

Finally, let $Y=\{y_1,y_2,\ldots,y_t\}\subset X$ be any $\rel$-transversal
$t$-subset. Define the $t$-subset $\widehat Y\subset \overline X$ as in (c). By
the definition of $\rel$, this $\widehat Y$ is an $\qrel$-transversal
$t$-subset of $\overline X$. So there is a $g\in G$ with $Y^g=\widehat Y$.
Hence the number of blocks in $\cB$ containing $Y$ is $\lambda_t$, as
required.\end{proof}

\begin{txt}
We shall refer to the $t$-DD $(X,\cB,\rel)$ as a \emph{$t$-lifting\/} of the
$t$-DD $(\overline X,\overline\cB,\qrel)$ under the action of $G$. Clearly,
$v:=\card X=(\card x^G)\overline v $, where $\overline v:= \card\overline X$
and $x\in X$ can be chosen arbitrarily. Note that we did not exclude the case
$k=\overline v$ in the previous theorem. In this case the $t$-DD $(\overline
X,\overline \cB,\qrel)$ is trivial, since $\overline X$ is its only block, and
the lifted $t$-DD is transversal.

By construction, the group $G$ acts as a group of automorphisms of the $t$-DD
$(X,\cB,\rel)$. The group $G$ acts transitively on the set
of blocks if, and only if, the base DD has a unique block.

As has been noted, (\ref{eq:schnitt}) implies that for all sets $\overline
Z\subset \overline X$ the \emph{setwise\/} stabilizer $G_{\overline Z}$
coincides with the \emph{pointwise\/} stabilizer of $\overline Z$ in $G$. It is
therefore unambiguous to call $G_{\overline Z}$ just the \emph{stabilizer\/} of
$\overline Z$ in $G$, a terminology which is adopted below.

We recall from \cite{spera-92a} that a $t$-DD can be obtained with Spera's
construction if, and only if, it admits a group of automorphisms which acts
transitively on the set of blocks and transitively on the set of transversal
$t$-subsets of points. The following theorem states that under one additional
condition the procedure of $t$-lifting preserves the property that a $t$-DD can
be obtained with Spera's construction.
\end{txt}

\begin{thm}\label{thm:spera-invariant}
Let\/ $\cD=(X,\cB,\rel)$ be the $t$-lifting of a $t$-divisible design\/
$\overline\cD=(\overline X,\overline\cB,\qrel)$ under the action of $G$. Assume
that there is a group $\overline H$ of automorphisms of\/ $\overline\cD$ which
acts transitively on\/ $\overline\cB$ and transitively on the set of\/
$\qrel$-transversal $t$-subsets of $\overline X$. If each $\overline
h\in\overline H$ can be extended to an automorphism of\/ $\cD$, then\/ $\cD$
admits a group of automorphisms which acts transitively on $\cB$ and
transitively on the set of\/ $\rel$-transversal $t$-subsets of $X$. Hence\/
$\cD$ can also be obtained with the construction of Spera
\emph{\cite[Proposition~4.6]{spera-92a}}.
\end{thm}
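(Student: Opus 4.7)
My plan is to build a sufficiently large group of automorphisms of $\cD$ by combining the action of $G$, which already acts on $\cD$ as automorphisms by the remark following Theorem~\ref{thm:lifting}, with extensions to $\cD$ of all elements of $\overline H$. Concretely, I set
\[
    H := \{\, h\in\Aut(\cD) : h(\overline X)=\overline X \text{ and } h|_{\overline X}\in \overline H\,\}.
\]
This is a subgroup of $\Aut(\cD)$, being the preimage of $\overline H$ under the restriction homomorphism defined on the set-wise stabiliser of $\overline X$ in $\Aut(\cD)$. The hypothesis that each $\overline h\in \overline H$ can be extended to $\cD$ is precisely the statement that the restriction $H\to\overline H$ is surjective. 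Let $K\le\Aut(\cD)$ be the subgroup generated by $H$ together with the image of $G$. I then claim that $K$ is transitive on $\cB$ and on the $\rel$-transversal $t$-subsets of $X$; by the characterisation of Spera's construction \cite[Proposition~4.6]{spera-92a} recalled before the theorem, this gives the desired conclusion.

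For block-transitivity, let $B_1,B_2\in\cB$. By (\ref{eq:B+R}) I may write $B_i=\overline B_i{}^{g_i}$ with $\overline B_i\in\overline\cB$ and $g_i\in G$. Transitivity of $\overline H$ on $\overline\cB$ yields $\overline h\in\overline H$ with $\overline B_1{}^{\overline h}=\overline B_2$, and any extension $h\in H$ of $\overline h$ satisfies $\overline B_1{}^{h}=\overline B_2$ because $\overline B_1\subset\overline X$. Consequently $g_1^{-1}h g_2\in K$ sends $B_1$ to $B_2$.

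The argument for transitivity on $\rel$-transversal $t$-subsets is entirely parallel, with condition~(c) of Theorem~\ref{thm:lifting} used to pull everything back into $\overline X$. Given $\rel$-transversal $t$-subsets $Y_1,Y_2\subset X$, the definition of $\rel$ in (\ref{eq:B+R}) forces $\widehat Y_1$ and $\widehat Y_2$ to be genuine $\qrel$-transversal $t$-subsets of $\overline X$ (any coincidence $\widehat y_a=\widehat y_b$ with $a\neq b$ would give $y_a\rel y_b$ by reflexivity of $\qrel$, contradicting $\rel$-transversality); (c) then supplies $g_i\in G$ with $Y_i^{g_i}=\widehat Y_i$. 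Transitivity of $\overline H$ on $\qrel$-transversal $t$-subsets furnishes $\overline h\in\overline H$ with $\widehat Y_1{}^{\overline h}=\widehat Y_2$, and an extension $h\in H$ gives $Y_1^{g_1 h g_2^{-1}}=Y_2$. The proof is thus essentially a bookkeeping exercise in composing elements of $G$, $H$, and $G$; there is no real obstacle, provided one is careful to notice that all extensions exist by hypothesis and that the restriction map to $\overline H$ is well defined so that $H$ is actually a group.
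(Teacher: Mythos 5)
Your argument is correct and follows essentially the same route as the paper's proof: write $B_i=\overline B_i{}^{g_i}$ (resp.\ pull the $t$-subsets back to $\qrel$-transversal subsets of $\overline X$ via condition (c)), use transitivity of $\overline H$ together with an extension $h$, and conjugate by $g_1^{-1}\cdot h\cdot g_2$. The explicit construction of the group $K$ and the verification that $\widehat Y_i$ is $\qrel$-transversal are just details the paper leaves implicit ("can be shown similarly"), so nothing essentially new or missing.
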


\begin{proof}
Let $B_1,B_2\in\cB$ be blocks. So, by the definition of $\cB$, there exist
$g_1,g_2\in G$ and $\overline B_1, \overline B_2\in\overline\cB$ with
$B_i=\overline B{}_i^{g_i}$ for $i\in\{1,2\}$. The assumption on $\overline H$
gives the existence of an automorphism $h$ of $\cD$ such that $\overline
B{}_1^h= \overline B_2$. Hence $B_1^{g_1^{-1} h g_2}=B_2$, i.e., the
automorphism group of $\cD$ acts transitively on $\cB$.

The transitivity of the automorphism group of $\cD$ on the set of
$\rel$-transversal $t$-subsets of $X$ can be shown similarly.
\end{proof}

The following lemma gives a sufficient condition for an extension of an
automorphism of $\overline\cD$ to be an automorphism of $\cD$. We shall use it
in Theorem \ref{thm:spera-geht-auch}.

\begin{lem}\label{lem:normalisiert}
Let $\cD=(X,\cB,\rel)$ be the $t$-lifting of a $t$-divisible design $\overline\cD=(\overline X,\overline\cB,\qrel)$ under the action of $G$. Assume that an automorphism $\overline h$ of\/ $\overline\cD$ can be extended to a permutation $h$ of $X$ which normalizes the group of automorphisms of $\cD$ induced by $G$. Then $h$ is an automorphism of $\cD$.
\end{lem}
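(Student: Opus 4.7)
The plan is to verify the two things required for $h$ to be an automorphism of $\cD$: first that $h$ preserves the equivalence relation $\rel$, and second that $h$ permutes the block set $\cB$. Throughout I will exploit two hypotheses: that $h$ extends $\overline h$ (so $h|_{\overline X}=\overline h$, and consequently also $h^{-1}|_{\overline X}=\overline h^{-1}$, again an automorphism of $\overline\cD$), and that $h$ normalizes the permutation group induced by $G$ on $X$, so that for every $g\in G$ there exists $g'\in G$ with $gh=hg'$ (equivalently, $h^{-1}gh\in G$).

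The crucial auxiliary identity I would establish first is
\[
  \widehat{x^h} \;=\; (\widehat x)^h \quad\text{for every } x\in X.
\]
To see this, note that $(\widehat x)^h=\overline h(\widehat x)\in\overline X$ because $\overline h$ is a permutation of $\overline X$. By property (a) of Theorem~\ref{thm:lifting} it thus suffices to show that $x^h$ and $(\widehat x)^h$ lie in the same $G$-orbit. Since $\widehat x\in x^G$, pick $g\in G$ with $x^g=\widehat x$; applying $h$ gives $(\widehat x)^h=x^{gh}=x^{hg'}=(x^h)^{g'}$ for some $g'\in G$ by the normalization hypothesis. Uniqueness of $\widehat{x^h}$ in property (a) then forces $\widehat{x^h}=(\widehat x)^h$.

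With this in hand, preservation of $\rel$ is immediate: by the definition of $\rel$ in \eqref{eq:B+R} and the fact that $\overline h$ is an automorphism of $\overline\cD$,
\[
  (x,x')\in{\rel} \iff (\widehat x,\widehat x')\in{\qrel} \iff ((\widehat x)^h,(\widehat x')^h)\in{\qrel} \iff (\widehat{x^h},\widehat{x'^h})\in{\qrel} \iff (x^h,x'^h)\in{\rel}.
\]
For preservation of $\cB$, take any block of the lifted design, written as $B=\overline B{}^g$ with $\overline B\in\overline\cB$ and $g\in G$. Using the normalization to write $gh=hg''$ for some $g''\in G$, I compute
\[
  B^h \;=\; \overline B{}^{gh} \;=\; \overline B{}^{hg''} \;=\; \bigl(\overline B{}^{\overline h}\bigr)^{g''},
\]
which lies in $\overline\cB{}^G=\cB$ because $\overline h$ is an automorphism of $\overline\cD$. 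Thus $\cB^h\subseteq\cB$. Applying the same argument to $h^{-1}$ (which extends $\overline h^{-1}$ and also normalizes $G$) yields $\cB^{h^{-1}}\subseteq\cB$, whence $\cB^h=\cB$.

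The only conceptual step is the first one, namely showing $\widehat{x^h}=(\widehat x)^h$; once the normalization hypothesis is combined with the uniqueness in (a), the rest is a direct verification using that $\overline h$ preserves both $\overline\cB$ and $\qrel$. I expect no further obstacle.
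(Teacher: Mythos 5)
Your proof is correct and follows essentially the same route as the paper: the block part (writing $B=\overline B{}^g$ and using $gh=hg'$ together with $\overline B{}^h=\overline B{}^{\overline h}$) is the paper's argument verbatim, and your identity $\widehat{x^h}=(\widehat x)^h$ is just a repackaging of the paper's computation $C^h=\bigcup_{g}\overline C{}^{gh}=\bigcup_{g'}\overline C{}^{\overline h g'}$ showing point classes map to point classes. Your extra remarks (uniqueness in condition (a), and applying the argument to $h^{-1}$ to get surjectivity on $\cB$) are fine but not a different method.
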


\begin{proof}
Since $h$ normalizes the automorphism group induced by $G$, the following
holds: For each $g\in G$ there exists $g'\in G$ with $x^{gh}=x^{hg'}$ for all
$x\in X$.
\par
Let $B\in\cB$ be a block. Hence $B=\overline B{}^g$ for some $g\in G$ and some
block $\overline B\in \overline\cB$. As $\overline B{}^{h}=\overline
B{}^{\overline h}$ is a block, so is $B^h= \overline B{}^{gh}=\overline
B{}^{hg'}$.
\par
Suppose that $C$ is a point class of $\cD$. Hence $C=\bigcup_{g\in G}\overline
C{}^g$ for some point class $\overline C$ of $\overline\cD$. Therefore
\begin{equation*}
    C^h = \bigcup_{g\in G}\overline C{}^{gh}
    = \bigcup_{g'\in G}\overline C{}^{hg'}
    = \bigcup_{g'\in G}\overline C{}^{\overline h g'}
\end{equation*}
is also a point class of $\cD$.
\end{proof}

The question arises, whether \emph{proper $t$-liftings\/} (i.e.\ $\overline
X\neq X$) do exist. The next theorem gives an answer.

\begin{thm}\label{thm:geht-immer}
Each $t$-divisible design $\overline\cD=(\overline X,\overline \cB,\qrel)$ can
be used as base for a proper $t$-lifting.
\end{thm}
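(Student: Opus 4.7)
The plan is to exhibit an $X\supsetneq\overline X$, a group $G$, and an action of $G$ on $X$ that satisfy the five conditions of Theorem \ref{thm:lifting} without having to interact in any essential way with the block structure $\overline\cB$. A product construction does this cleanly. Fix any non-trivial finite group $M$ (for instance $M=\bZ/2\bZ$), put $X:=\overline X\times M$, and identify $\overline X$ with $\overline X\times\{1\}$, where $1$ is the identity of $M$. Extend $\qrel$ to an equivalence relation $\rel$ on $X$ by declaring $(\overline x,m)\rel(\overline x',m')$ iff $\overline x\qrel\overline x'$. As acting group I take $G:=M^{\overline X}$, the group of all functions $\overline X\to M$ under pointwise multiplication, and let $G$ act on $X$ by
$$
   g\cdot(\overline x,m):=\bigl(\overline x,\, g(\overline x)\cdot m\bigr).
$$

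Since the first coordinate is fixed by every $g$, each $G$-orbit has the form $\{\overline x\}\times M$ and meets $\overline X$ exactly in $(\overline x,1)$. This immediately gives conditions (a) and (b), with $\widehat{(\overline x,m)}=(\overline x,1)$ and $|\overline x{}^G|=|M|$. For condition (c), given a $t$-subset $Y=\{(\overline x_1,m_1),\ldots,(\overline x_t,m_t)\}$ of $X$ whose associated set $\widehat Y=\{(\overline x_i,1):i=1,\ldots,t\}$ is $\qrel$-transversal in $\overline X$, the elements $\overline x_1,\ldots,\overline x_t$ are pairwise distinct, so the function $g\in G$ given by $g(\overline x_i):=m_i^{-1}$ for $i=1,\ldots,t$ and $g(\overline x):=1$ otherwise is well defined and manifestly satisfies $Y^g=\widehat Y$.

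The main subtlety, and what I expect to be the real obstacle dictating the choice of $G$, is precisely condition (c): a one-parameter group acting only on the second coordinate is too small, since different points of $Y$ may sit above $\overline X$ with different ``heights'' $m_i$ that would all have to be trivialised by the same group element. Allowing an independent shift on each fibre via the power group $M^{\overline X}$ overcomes this. Once this is in place, conditions (d) and (e) follow by direct counting: the stabiliser of a $\qrel$-transversal $t$-subset $\overline Y\subset\overline X$ consists of the $g\in G$ with $g(\overline x)=1$ for all $\overline x\in\overline Y$, and hence has cardinality $|M|^{|\overline X|-t}$; similarly $|G_{\overline B}|=|M|^{|\overline X|-k}$ for every block $\overline B\in\overline\cB$. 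Both depend only on $|M|$, $|\overline X|$, $t$ and $k$, hence are constant across the relevant sets. Theorem \ref{thm:lifting} then produces a $t$-lifting $(X,\cB,\rel)$ of $\overline\cD$, and because $|M|\geq 2$ we have $\overline X\subsetneq X$, so this lifting is proper.
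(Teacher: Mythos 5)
Your proposal is correct and is essentially the paper's own argument: the paper also takes $X=\overline X\times W$ with a direct product of transitive permutation groups acting independently on each fibre, and verifies (a)--(e) by the same orbit/stabilizer count ($\card G_{\overline Z}=\card G/w^u$ for a transversal $u$-subset). Your choice $G=M^{\overline X}$ with $M$ acting regularly on itself is just a special case of the paper's family $(G_i)_{i\in\overline X}$ of transitive subgroups of the symmetric group on $W$, so no genuinely different route is involved.
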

\begin{proof}
We may assume that $\overline X=\{1,2,\ldots,\overline v\}$ is a set of
integers. We fix an integer $w\geq 1$ and write $W:=\{1,2,\ldots,w\}$. Let
$(G_i)_{i\in\overline X}$ be a family of subgroups (not necessarily distinct)
of the symmetric group of $W$. Assume, furthermore, that each $G_i$ acts
transitively on $W$. We now define $X:=\overline X\times W$, and then we
identify $i\in \overline X$ with the pair $(i,1)\in X$. Let $G$ be the direct
product $\prod_{i=1}^{\overline v} G_i$. An action of $G$ on $X$ is given by
defining the image of $(i,j)$ under $(g_1,g_2,\ldots,g_{\overline v})$ as
$(i,j^{g_i})$. Obviously, conditions (a), (b), and (c) in Theorem
\ref{thm:lifting} hold. Given an $\qrel$-transversal $u$-subset $\overline Z$
we obtain that $\card \overline Z{}^G=w^u$. Therefore
\begin{displaymath}
\card G_{\overline Z} = \frac{\card G}{w^u} ,
\end{displaymath}
whence also the remaining two conditions (d) and (e) are satisfied. So Theorem
\ref{thm:lifting} can be applied. For $w>1$ this yields a proper $t$-lifting.
\end{proof}

\begin{txt}
It should be noted that the lifted DD from the proof above allows an alternative description without referring to the group $G$: A subset of $X$ is a block if, and only if, its projection on $\overline X$ is a block of $\overline\cD$. The point classes of the lifted DD are the cartesian products of the point classes of $\overline\cD$ with $W$.

We shall present other, less trivial, general constructions for proper
$t$-liftings of an arbitrary $t$-DD in \ref{ex:liftbar}.
\end{txt}

\begin{nrtxt}
Let $s$ be a positive integer and $\cD = (X,\cB,\rel)$ a $t$-DD. Given $Y
\subset X$ denote by $Y^*$ the set of all $x\in X$ for which there exists an
$y\in Y$ with $x \rel y$. Then $\cD$ is called \emph{$s$-hypersimple} if for
every block $B$ and for every $\rel$-transversal $t$-subset $Y$ contained in
$B^*$ there exist exactly $s$ blocks $B_1,B_2,\ldots,B_s$ containing $Y$ and
such that $B^*_i$ = $B^*$ for each $i\in\{1, 2,\ldots,s\}$; see
\cite{spera-95}. The $t$-liftings described in Theorem \ref{thm:lifting} are
$s$-hypersimple with $s = \#G_Y /\#G_B$. It seems to be an open problem to find
regular $t$-divisible designs with $t> 3$ and which are not $s$-hypersimple for
any $s$.
\end{nrtxt}

\section{Geometric examples of $t$-divisible designs for any
$t$}\label{se:geometric}

\begin{txt}
In this chapter we focus our attention on $t$-DDs which arise from point sets
in a finite projective or affine space.

\end{txt}

\begin{thm}\label{thm:existenz-neu}
Let $t$ be a fixed positive integer and let\/ $\overline\cD=(\overline
X,\overline\cB,\overline{\rel})$ be a $t$-$(\overline s,k,\overline\lambda_t)$
divisible design with the following properties:
\begin{itemize}\engschreiben
\item[\emph{(i)}]

$\overline{X}$ is a set of $\overline v$ points generating a finite projective
space $\PG(d,q)$.

\item[\emph{(ii)}]

All\/ $\qrel$-transversal $t$-subsets of\/ $ \overline{X}$ are independent in
$\PG(d,q)$.

\item[\emph{(iii)}]

All blocks in\/ $\overline\cB$ generate subspaces of\/ $\PG(d,q)$ with the same
dimension $\beta-1$.

\end{itemize}
Then for each non-negative integer $c$ there exists a $t$-$(q^c\overline
s,k,q^{c(\beta-t)}\overline\lambda_t)$-divisible design with $q^c\overline v$
points.
\end{thm}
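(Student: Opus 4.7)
The plan is to apply the $t$-lifting framework of Theorem~\ref{thm:lifting} with $(X,G)$ chosen geometrically. Embed $\PG(d,q)=\PG(V)$ as a subspace of $\PG(d+c,q)=\PG(V\oplus U)$, where $U$ is an auxiliary $c$-dimensional vector space over the field of $q$ elements, and let $\Lambda:=\PG(U)$ be the complementary ``vertex'' of dimension $c-1$. Set
\begin{displaymath}
X:=\bigcup_{\overline x\in\overline X}\bigl(\langle\overline x,\Lambda\rangle\setminus\Lambda\bigr),
\end{displaymath}
which is the cone with vertex $\Lambda$ and base $\overline X$, minus its vertex. Writing $\overline x=[v]$, the set $\langle\overline x,\Lambda\rangle\setminus\Lambda$ consists of the $q^c$ points $[v+u]$, $u\in U$, and these sets are pairwise disjoint because two distinct subspaces $\langle\overline x,\Lambda\rangle$ meet precisely in $\Lambda$; hence $\card X=q^c\overline v$. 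As group $G$ take the additive group $\Hom(V,U)$ acting on $\PG(V\oplus U)$ via $\phi\colon(v,u)\mapsto(v,u+\phi(v))$. Then $G$ has order $q^{(d+1)c}$, preserves $X$, and meets the group of scalars trivially.

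Now I would verify the five hypotheses of Theorem~\ref{thm:lifting}. Conditions (a) and (b) are immediate: the orbit of $\overline x=[v]\in\overline X$ is exactly $\{[v+u]:u\in U\}$, of cardinality $q^c$, and these orbits partition $X$ by construction. The substantive step is condition (c): given $Y=\{[v_i+u_i]\}_{i=1}^t\subset X$ for which $\widehat Y=\{[v_i]\}_{i=1}^t$ is an $\qrel$-transversal $t$-subset of $\overline X$, hypothesis (ii) yields linear independence of $v_1,\ldots,v_t$ in $V$, so one can prescribe $\phi(v_i)=-u_i$ and extend linearly to a $\phi\in\Hom(V,U)$ whose action sends $Y$ to $\widehat Y$. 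For (d) and (e), since $U\cap V=\{0\}$, any $\phi\in G$ stabilizing a projectively independent subset setwise must fix each of its points; the pointwise stabilizer of such a set whose representatives span an $r$-dimensional subspace of $V$ then consists of exactly those $\phi$ that vanish on that subspace, and has order $q^{(d+1-r)c}$. Applying this with $r=t$ via (ii) and $r=\beta$ via (iii) confirms (d) and (e), with ratio $q^{c(\beta-t)}$.

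Theorem~\ref{thm:lifting} now delivers a $t$-divisible design $(X,\cB,\rel)$ with $q^c\overline v$ points, point-class size $s=q^c\overline s$, block size $k$, and
\begin{displaymath}
\lambda_t=\overline\lambda_t\cdot\frac{\card G_{\overline Y}}{\card G_{\overline B}}=q^{c(\beta-t)}\overline\lambda_t,
\end{displaymath}
exactly as claimed. The main obstacle is the verification of (c); the remaining conditions reduce to counting dimensions of subspaces of $V$ via the complementarity $V\cap U=\{0\}$, and the case $c=0$ is trivial.
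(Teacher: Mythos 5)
Your proposal is correct and follows essentially the same route as the paper: your group $\Hom(V,U)$ acting by $(v,u)\mapsto(v,u+\phi(v))$ is exactly the paper's matrix group (\ref{eq:matrix}), your $X$ is the same cone over $\overline X$ with vertex a complementary subspace, and conditions (a)--(e) of Theorem~\ref{thm:lifting} are verified by the same dimension counts, with your coordinate-free extension of $\phi$ on the independent vectors $v_1,\ldots,v_t$ replacing the paper's explicit matrix computation (\ref{eq:transitiv}). Only note that in (e) a block need not be an independent set, so you should apply your stabilizer argument to its span (as the paper does via an independent $\beta$-subset of $\overline B$); this is immediate, since any $\phi$ stabilizing a subset of $\PG(V)$ setwise fixes it pointwise and its stabilizer consists precisely of the $\phi$ vanishing on that span.
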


\begin{proof}
Let $c$ be a non-negative integer, $n:=d+c$, and identify $\PG(d,q)$ with the
subspace of $\PG(n,q)$ given by the linear system
\begin{equation*}\label{}
  x_{d+1}=x_{d+2}=\cdots = x_{n}=0.
\end{equation*}
Furthermore, choose $S\subset\PG(n,q)$ to be the $(c-1)$-dimensional subspace
\begin{equation*}\label{}
  x_{0}=x_{1}=\cdots = x_{d}=0.
\end{equation*}
Next, let $G$ be the multiplicative group formed by all upper triangular
matrices of the form
\begin{equation}\label{eq:matrix}
    \Mat2{I_{d+1} & M\\ 0 & I_{c}}\in\GL_{n+1}(q),
\end{equation}
where $M$ is any $(d+1)\times c$ matrix with entries in $\bF_q=\GF(q)$, $I_{*}$
stands for an identity matrix of the indicated size, and $0$ denotes a zero
matrix of the appropriate size. The group $G$ is elementary abelian, since it
is isomorphic to the additive group of $(d+1)\times c$ matrices over $\bF_q$.
By writing the coordinates of points as row vectors, the group $G$ acts in a
natural way (from the right hand side) on $\PG(n,q)$ as a group of projective
collineations. The subspace $S$ is fixed pointwise, and every subspace of
$\PG(n,q)$ containing $S$ remains invariant, as a set of points. We obtain
\begin{equation}\label{eq:bahn}
    \forall\,x\in \PG(n,q)\setminus S : x^G = (\{x\}\vee S) \setminus S,
\end{equation}
i.e., the orbit of a point $x$ not in $S$ is the $c$-dimensional affine space
which arises from the projective space $\{x\}\vee S$ by removing the subspace
$S$. We define $\pi:\PG(n,q)\setminus S \to \PG(d,q)$ to be the projection
through the centre $S$ onto $\PG(d,q)$. By (\ref{eq:bahn}), two points of
$\PG(n,q)\setminus S$ are in the same $G$-orbit if, and only if, their images
under $\pi$ coincide.

We shall frequently make use of the following \emph{auxiliary result}. Let $Q$
be an independent $(d+1)$-subset of $\PG(n,q)$ which together with $S$
generates $\PG(n,q)$. We claim that there is a unique matrix in $G$ taking each
element of $Q$ to its image under $\pi$. In order to show this assertion, we
choose a $(d+1)\times (d+1)$ matrix $L$ and a $(d+1) \times c$ matrix $M$ in
such a way that the rows of $(L \;M)$ represent the points of $Q$ (written in
some fixed order). Consequently, the rows of the matrix $(L\;0)$ represent the
$(d+1)$ points of $Q^\pi$ (ordered accordingly). By the exchange lemma, the
points of $Q^\pi$ are also independent, whence $L$ is invertible. We infer from
\begin{equation}\label{eq:transitiv}
\Mat2{L & M}\underbrace{\Mat2{I_{d+1} & -L^{-1}M \\ 0 & I_c}}_{{}:=g} = \Mat2{L
& 0}
\end{equation}
that $g\in G$ takes each point $x\in Q$ to $x^\pi\in Q^\pi$. Conversely, if a
matrix $\tilde g\in G$ takes $Q$ to $Q^\pi$ then $(L\;M)\cdot\tilde g=(L\;0)$,
so $\tilde g=g$.

\par
Finally, we define $X$ as the union of all orbits $\overline x^G$, where
$\overline x$ ranges in $\overline X$, and proceed by showing that the
assumptions (a)--(e) of Theorem \ref{thm:lifting} are satisfied:
\par
Ad (a): By (\ref{eq:bahn}), the projection $\pi$ maps each $x\in X$ to the only
element $\widehat x\in \overline X$ with the required property.
\par
Ad (b): All orbits $\overline x{}^G$, where $\overline x\in\overline X$, have
size $q^c$ according to (\ref{eq:bahn}).
\par
Ad (c): Let $Y$ be a subset of $X$, such that $\widehat Y$ is an
$\qrel$-transversal $t$-subset of $\overline X$. Due to our assumption (ii),
the projected $t$-subset $Y^\pi=\widehat Y$ of $\overline X$ is independent.
Thus it can be extended to a basis of $\PG(d,q)$ by adding a $(d-t+1)$-subset
$P$. The set $Y$ is independent because its projection is independent.
Moreover, $Q:=Y\cup P$ meets the requirement from our auxiliary result. Now the
matrix $g$ from (\ref{eq:transitiv}) takes $Y$ to $\widehat Y$.
\par
Ad (d): First, let $Y'\subset\PG(d,q)$ be the $t$-set of points given by the
first $t$ vectors of the canonical basis of $\bF_q^{d+1}$. So the pointwise
stabilizer of $Y'$ in $G$ consists of all matrices
\begin{equation}\label{eq:stabil_K}
    \Mat3{I_t & 0 & 0 \\ 0 & I_{d-t+1} & K \\ 0 & 0 & I_c},
\end{equation}
with an arbitrary $(d-t+1)\times c$ submatrix $K$ over $\bF_q$. Obviously, the
pointwise and the setwise stabilizers of $Y'$ in $G$ coincide.
\par
Next, suppose that $\overline Y\subset\overline X$ is an $\qrel$-transversal
$t$-subset, whence $\overline Y$ is independent. So $\overline Y$ can be
extended to a basis of $\PG(d,q)$. There exists a $(d+1)\times (n+1)$ matrix of
the form $(L\;0)$ whose rows represent the points of the chosen basis. Thereby
it can be assumed that the first $t$ rows are representatives for $\overline
Y$. We read off from
\begin{equation*}
    \Mat2{L^{-1} & 0\\ 0 & I_c}\Mat2{I_{d+1} & M \\ 0 & I_c}\Mat2{L & 0\\ 0 & I_c}
    = \Mat2{I_{d+1} & L^{-1} M\\ 0 & I_{c}},
\end{equation*}
where $M$ is arbitrary, that
\begin{equation*}
 G = \Mat2{L^{-1} & 0\\0 & I_c} G \Mat2{L & 0\\ 0 & I_c} \mbox{ ~and~ }
 G_{\overline Y} = \Mat2{L^{-1} & 0\\0 & I_c} G_{Y'} \Mat2{L & 0\\ 0 & I_c}.
\end{equation*}
Hence $\card G_{\overline Y}$ does not depend on the choice of $\overline Y$,
and (\ref{eq:stabil_K}) shows that
\begin{equation}\label{eq:lambda_t}
  \card G_{\overline Y}=q^{c(d-t+1)}.
\end{equation}

Ad (e): Choose any block $\overline B\in\overline\cB$. There exists an
independent $\beta$-subset $\overline Z\subset \overline B$. The setwise and
the pointwise stabilizers of $\overline Z$ and $\overline B$ in $G$ are all the
same. We may now proceed as in the proof of (d), with $t$, $Y'$, and $\overline
Y$ to be replaced by $\beta$, an adequate $\beta$-set $Z'$, and $\overline Z$,
respectively. Then (\ref{eq:lambda_t}) gives that
\begin{equation}\label{eq:bloecke_G}
  \card G_{\overline B}=q^{c(d-\beta+1)}
\end{equation}
has a constant value.

Now $\lambda_t=q^{c(\beta-t)}\overline\lambda_t$ is immediate from
(\ref{eq:DD-lambda}), (\ref{eq:lambda_t}), and (\ref{eq:bloecke_G}).
\end{proof}
\begin{txt}
Let us add some remarks on Theorem \ref{thm:existenz-neu}.
\end{txt}

\begin{nrtxt}\label{:bemerkungen}
The only reason for including condition (i) is to simplify matters. We could
also drop it and carry out our construction in the join of $S$ and the subspace
generated by $\overline X$.
\par
It is easily seen that the $t$-lifting process of Theorem
\ref{thm:existenz-neu} can be iterated. Given a base $t$-DD we may first apply
a $t$-lifting for some fixed integer $c_1>0$. This gives a second $t$-DD which
can be used as the base DD for a second $t$-lifting for some fixed integer
$c_2>0$. The $t$-DD obtained in this way may also be reached in a single step
from the initial base DD by applying a $t$-lifting with the integer
$c:=c_1+c_2$.
\par
Suppose that $t=2$, $c=1$. By removing the assumption (i), we obtain a
variation of Theorem \ref{thm:existenz-neu} which yields once more results from
\cite[Theorem~3.2.7]{giese-05a}. In order illustrate how the settings in
\cite{giese-05a} (hyperplanes of an affine space, translation group) correspond
to our settings, we merely have to adopt a dual point of view: Each point $p$
of $\PG(n,q)$ gives rise to the star of hyperplanes of $\PG(n,q)$ with vertex
$p$ or, said differently, a single hyperplane of $\PG(n,q)^*$. In this way we
obtain a bijective correspondence of $\PG(n,q)$ (as a set of points) with the
set of hyperplanes of its dual space $\PG(n,q)^*$. Due to $c=1$ the subspace
$S$ corresponds to a hyperplane of $\PG(n,q)^*$ which can be considered as
being at infinity. The group $G$ acts on the dual space as the corresponding
translation group. For an arbitrary $t$ and $c=1$ our Theorem improves
\cite[Proposition~3.2.9]{giese-05a}.
\par
There is a particular case, where we can give an alternative description of the
divisible design $(X,\cB,\rel)$ from Theorem \ref{thm:existenz-neu}.
\end{nrtxt}

\begin{cor}\label{cor:alternative}
Let $t$ be any positive integer and let $\overline{X}$ be a $k$-set of points
generating the projective space\/ $\PG(d,q)$, such that each $t$-subset of $
\overline{X}$ is independent, where $t\leq k$. We embed\/ $\PG(d,q)$ as a
subspace in\/ $\PG(n,q)$, where $n=d+c$ for some positive integer $c$, and
choose any subspace $S$ of\/ $\PG(n,q)$ complementary with $\PG(d,q)$. Define
$(X,\cB,\rel)$ as follows.
\begin{itemize}\engschreiben
\item[\emph{(i)}] $X$ is the cone with basis $\overline X$ and vertex $S$, but
without its vertex $S$.

\item[\emph{(ii)}] $\cB$ is the set of all sections $X\cap D$, where $D$ is
complementary with $S$.

\item[\emph{(iii)}] ${\rel}:=\{(x,x')\in X\times X \mid \{x\}\vee S =
\{x'\}\vee S \}$.
\end{itemize}
This $(X,\cB,\rel)$ is a transversal $t$-$(q^c,k,q^{c(d-t+1)})$-divisible
design.
\end{cor}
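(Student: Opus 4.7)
The plan is to realize $(X,\cB,\rel)$ as the $t$-lifting obtained from Theorem \ref{thm:existenz-neu} applied to the \emph{trivial} base design
$\overline\cD = (\overline X,\{\overline X\},=)$,
i.e., the $t$-$(1,k,1)$-DD whose equivalence relation is equality, whose only block is $\overline X$ itself, and whose single block contains every $t$-subset exactly once. The hypothesis that every $t$-subset of $\overline X$ is independent makes condition (ii) of Theorem \ref{thm:existenz-neu} hold; condition (i) is assumed, and condition (iii) is automatic because there is only one block, which spans $\PG(d,q)$, so $\beta = d+1$. Plugging $\overline s = 1$, $\overline\lambda_t = 1$, and $\beta = d+1$ into the formulas given by the theorem yields precisely the parameters $s = q^c$ and $\lambda_t = q^{c(d-t+1)}$ claimed in the corollary.

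Next I would show that the lifted structure coincides with the $(X,\cB,\rel)$ described in (i)--(iii) of the corollary. After choosing coordinates so that $\PG(d,q)$ and $S$ are the two coordinate subspaces used in the proof of Theorem \ref{thm:existenz-neu} (which is legitimate since any two complements of $\PG(d,q)$ are related by a collineation fixing $\PG(d,q)$ pointwise), equation (\ref{eq:bahn}) gives $\overline x^G = (\{\overline x\}\vee S)\setminus S$. Hence the point set $X$ of the lifted design is the union of these affine subspaces over $\overline x\in\overline X$, that is, the cone with basis $\overline X$ and vertex $S$ minus $S$, as in (i). The defining condition of $\rel$ in Theorem \ref{thm:lifting} reads $x\rel x'$ iff $x$ and $x'$ project through $S$ to the same point of $\overline X$, which is exactly $\{x\}\vee S = \{x'\}\vee S$, matching (iii).

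The main work, and the only nontrivial step, is to identify the orbit $\{\overline X\}^G$ with the family of sections $X\cap D$, $D$ a complement of $S$. For $g = \Mat2{I_{d+1} & M \\ 0 & I_c}\in G$, the subspace $D := \PG(d,q)^g = \{(v,vM)\mid v\in\bF_q^{d+1}\}$ is a complement of $S$, and a point of $D$ lies in $X$ precisely when its projection $(v,0)$ represents a point of $\overline X$; hence $\overline X^g = X\cap D$. Conversely, every complement $D$ of $S$ can be parametrized as $\{(v,vM)\mid v\in\bF_q^{d+1}\}$ via the projection-isomorphism $D\to\PG(d,q)$, so by the auxiliary result in the proof of Theorem \ref{thm:existenz-neu} it equals $\PG(d,q)^g$ for a unique $g\in G$. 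This two-way correspondence shows $\cB = \{X\cap D : D\text{ complement of }S\}$, which is (ii).

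Finally, since each block is of the form $X\cap D$ with $D$ a complement of $S$, and each point class is of the form $(\{\overline x\}\vee S)\setminus S$, the intersection of any block with any point class consists of a single point (the unique point of $D$ lying on the line $\{\overline x\}\vee S$ not in $S$, provided $\overline x\in\overline X$). Thus $v/s = k$, and the lifted DD is transversal, completing the identification.
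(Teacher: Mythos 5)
Your proposal is correct and follows essentially the same route as the paper: take the trivial $t$-$(1,k,1)$-DD $(\overline X,\{\overline X\},\qrel)$ with the diagonal relation, apply Theorem \ref{thm:existenz-neu} with $\beta=d+1$, and identify the lifted structure with (i)--(iii) via (\ref{eq:bahn}) and the auxiliary result (transitivity of $G$ on the complements of $S$). Your explicit graph parametrization $D=\{(v,vM)\}$ of the complements and the remark on transversality merely spell out details the paper leaves implicit.
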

\begin{proof}
Let $ \overline{\cB}:=\{\overline{X}\}$ and let ${\qrel}$ be the diagonal
relation on $ \overline{X}$. The triple $(\overline{X},\overline{\cB},\qrel)$
is a trivial transversal $t$-$(1,k,1)$-DD with $ \overline{v}=k$ points and
just one block. Define $(X,\cB,\rel)$ as in the proof of Theorem
\ref{thm:existenz-neu}, where $\beta=d+1$. By (\ref{eq:bahn}), the point set
$X$ and the equivalence relation $\rel$ can be described as in (i) and (iii),
respectively. The auxiliary result in the proof of Theorem
\ref{thm:existenz-neu} shows that $G$ acts transitively on the set of
complements of $S$, whence (ii) characterizes the set of blocks.
\end{proof}

\begin{txt}
Next, we compare the lifting from the proof of Theorem \ref{thm:existenz-neu}
with Spera's construction.
\end{txt}

\begin{thm}\label{thm:spera-geht-auch}
Under the assumptions of Theorem \emph{\ref{thm:existenz-neu}} suppose that
there exists a group $\overline\Gamma$ of collineations of $\PG(d,q)$ which
acts on $\overline X$ as an automorphism group of the base $t$-DD
$\overline\cD$. Furthermore, we assume that $\overline\Gamma$ acts transitively
on the set $\overline\cB$ of blocks and transitively on the set of
$\qrel$-transversal $t$-subsets of $\overline X$. Then the $t$-lifting from the
proof of Theorem \emph{\ref{thm:existenz-neu}} yields $t$-divisible designs
which can also be obtained with Spera's construction
\emph{\cite[Proposition~4.6]{spera-92a}}.
\end{thm}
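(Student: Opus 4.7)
The plan is to reduce to Theorem \ref{thm:spera-invariant} by verifying its hypotheses: we must show that every $\overline h\in\overline\Gamma$ extends to an automorphism of $\cD$. For this we shall use Lemma \ref{lem:normalisiert}: it suffices to produce an extension $h$ of $\overline h$ to a permutation of $X$ that normalizes the automorphism group of $\cD$ induced by $G$. Since $\overline\Gamma$ already contains everything needed to establish the transitivity hypotheses of Theorem \ref{thm:spera-invariant}, this is all that remains to be done.

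To construct $h$, I would exploit the block structure chosen in the proof of Theorem \ref{thm:existenz-neu}. Write $\overline h$ as a semilinear map of $\bF_q^{d+1}$ given by $\vx\mapsto \vx^\sigma L$, where $L\in\GL_{d+1}(q)$ and $\sigma\in\Aut(\bF_q)$, and define $h$ as the semilinear map of $\bF_q^{n+1}$ with the same field automorphism $\sigma$ and matrix
\begin{equation*}
    h_L:=\Mat2{L & 0\\ 0 & I_c}.
\end{equation*}
By construction $h$ acts as $\overline h$ on $\PG(d,q)$ (the subspace $x_{d+1}=\cdots=x_n=0$) and fixes $S$ pointwise, so it permutes the $G$-orbits (\ref{eq:bahn}), which are exactly the sets $(\{\overline x\}\vee S)\setminus S$ indexed by $\overline x\in\overline X$. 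Because $\overline h$ preserves $\overline X$, $h$ sends $X$ to itself, hence is a permutation of $X$ extending $\overline h$.

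The remaining point is the normalization property. For $g\in G$ with matrix $g_M=\Mat2{I_{d+1} & M\\ 0 & I_c}$, a direct computation (using that $h_L$ is block-diagonal and that $\sigma$ commutes with the block operations) yields
\begin{equation*}
    h^{-1}\, g\, h \;=\; \Mat2{I_{d+1} & L^{\sigma^{-1}} M^{\sigma^{-1}}\\ 0 & I_c},
\end{equation*}
which again lies in $G$. Thus $h$ normalizes the subgroup of projective collineations induced by $G$, Lemma \ref{lem:normalisiert} applies, and $h$ is an automorphism of $\cD$. The collection of all such $h$, as $\overline h$ ranges over $\overline\Gamma$, inherits transitivity on $\overline\cB$ and on $\qrel$-transversal $t$-subsets of $\overline X$ by assumption, so Theorem \ref{thm:spera-invariant} supplies an automorphism group of $\cD$ acting transitively on $\cB$ and on $\rel$-transversal $t$-subsets, whence $\cD$ is obtainable via Spera's construction.

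The only delicate point I foresee is keeping the field automorphism $\sigma$ straight when $\overline\Gamma\not\subset\PGL$; once one records that $h$ carries the same $\sigma$ as $\overline h$ and that conjugation by a semilinear block-diagonal map sends the elementary abelian $G$ into itself simply by a twist of the matrix $M$, everything else is a routine bookkeeping step.
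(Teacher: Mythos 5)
Your proof is correct and follows essentially the same route as the paper: extend each semilinear map $(L,\sigma)$ underlying $\overline h\in\overline\Gamma$ to $(\diag(L,I_c),\sigma)$ on $\bF_q^{n+1}$, check by the block computation that conjugation sends $G$ onto itself, and then combine Lemma \ref{lem:normalisiert} with Theorem \ref{thm:spera-invariant}. One small inaccuracy: when $\sigma\neq\id$ your $h$ fixes $S$ only setwise, not pointwise, but setwise invariance of $S$ (and of $\overline X$) is all that is needed to see that $h$ permutes the $G$-orbits and leaves $X$ invariant.
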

\begin{proof}
Let $\overline J\subset \GaL_{d+1}(q)$ be the group of those semilinear
bijections which give rise to collineations in $\overline\Gamma$. (In our
setting $\GaL_{d+1}(q) = \GL_{d+1}(q)\rtimes\Aut(\bF_q)$, i.e., a semilinear
transformation appears as a pair consisting of a regular matrix and an
automorphism of $\bF_q$.) Then
\begin{equation*}
  J:=\{(\diag(P,I_c),\zeta) \mid (P,\zeta)\in\overline J \}\subset\GaL_{n+1}(q)
\end{equation*}
is a group of semilinear transformations which yields a collineation group of
$\PG(n,q)$, say $\Gamma$. For each $\overline\gamma\in\overline\Gamma$ there is
at least one extension in $\Gamma$. Since $\overline X$ and $S$ remain
invariant under the collineations in $\Gamma$, so does the set $X$. A
straightforward computation shows that
\begin{equation}\label{eq:normalisiert}
   j^{-1}Gj=G \mbox{ for all }j\in J;
\end{equation}
here we identify each $g\in G$ with $(g,\id_{\bF_q})\in\GaL_{n+1}(q)$. We infer
from Lemma \ref{lem:normalisiert} that $\Gamma$ acts on $X$ as an automorphism
group of the lifted $t$-DD $\cD$. Thus Theorem \ref{thm:spera-invariant} can be
applied to the automorphism group of $\overline\cD$ given by $\overline\Gamma$.
Altogether, we obtain the required result: Spera's construction can be applied
to $X$, $\rel$, an arbitrarily chosen $\overline B\in\overline\cB$ as base
block, and the group $\langle G,J\rangle$ of semilinear transformations
generated by $G$ and $J$.
\end{proof}
\begin{txt}
If the collineation group $\overline\Gamma$ from the above has the additional
property to act transitively on the set of $\qrel$-transversal $t$-tuples of
$\overline X$ then $\langle G,J\rangle$ will even act transitively on the set
of $\rel$-transversal $t$-tuples of $X$. For, if $(y_1,y_2,\ldots,y_t)$ is such
a $t$-tuple then there is an element $g\in G$ taking $(y_1,y_2,\ldots,y_t)$ to
the $\qrel$-transversal $t$-tuple $(y_1^g,y_2^g,\ldots,y_t^g)$ according to
assumption (c) in Theorem \ref{thm:lifting}.
\end{txt}

\begin{exas}\label{:beispiele}
(a) The \emph{small Witt design\/} $W_{12}=(\overline X,\overline\cB,\qrel)$ is
a $5$-$(1,6,1)$-DD (i.e.\ a design) with $\overline v=12$ points. By a result
of H.~S.~M.~Coxeter \cite{cox-58}, $W_{12}$ can be embedded in $\PG(5,3)$ in
such a way that the following properties hold: (i) $\overline X$ generates
$\PG(5,3)$. (ii) All $5$-subsets of $\overline X$ are independent. (iii) All
blocks span hyperplanes of $\PG(5,3)$. In fact, the blocks are those $132$
hyperplane sections of $\overline X$ which contain more than three points of
$\overline X$. We refer to \cite{havl-99}, \cite{pell-74}, \cite{todd-59}, and
\cite{white-66} for further properties of this model of $W_{12}$.
\par
We can apply Theorem \ref{thm:existenz-neu} to construct $5$-$(3^c,6,1)$-DDs
with $12\cdot 3^c$ points from $W_{12}$.

By \cite{cox-58}, each automorphism of $W_{12}$ can be extended in a unique way
to a a collineation of $\PG(5,3)$ leaving invariant the set $\overline X$. The
automorphism group of $W_{12}$ is the Mathieu group $M_{12}$. So we have a
collineation group $\overline\Gamma$ which acts sharply $5$-transitively on
$\overline X$. Since each block is uniquely determined by five of its points,
all blocks are in one orbit of $\overline\Gamma$. By Theorem
\ref{thm:spera-geht-auch}, this implies that the lifted $5$-DDs could also be
obtained with the construction of Spera.
\par
(b) Let $\overline X$ be as in (a). Corollary \ref{cor:alternative}, applied to
the set $\overline X$, yields the existence of $5$-$(3^c,12,3^c)$-DDs with the
same set of points and the same point classes as in (a), but with a different
set of blocks. As before, the lifted DDs could also be obtained with the
construction of Spera.
\par
(c) The \emph{large Witt design\/} $W_{24}=(\overline X,\overline\cB,\qrel)$ is
a $5$-$(1,8,1)$-DD (i.e.\ a design) with $\overline v=24$ points and $758$
blocks. An embedding in $\PG(11,2)$ is due to J.~A.~Todd \cite{todd-59}. It has
the following properties: (i) $\overline X$ generates $\PG(11,2)$. (ii) All
$5$-subsets of $\overline X$ are independent. (iii) All blocks span
$6$-dimensional subspaces of $\PG(11,2)$. The automorphism group of $W_{24}$ is
the Mathieu group $M_{24}$ which acts $5$-transitively on the point set of
$W_{24}$. Each automorphism of $W_{24}$ extends to a unique collineation of
$\PG(11,2)$; see \cite{todd-59}. Mutatis mutandis, it is now possible to
proceed as in (a) and (b).
\par
(d) Any field extension $\bF_{q^h}/\bF_q$, $h>1$, gives rise to a \emph{chain
geometry\/} $\Sigma(\bF_q,\bF_{q^h})$; see, for example,
\cite[pp.~40--41]{blunck+h-05b} (``M{\"o}biusraum'') or \cite{herz-95}. Such a
chain geometry is a $3$-$(1,q+1,1)$-DD (i.e.\ a design) with $q^h+1$ points. We
speak of chains rather than blocks in this context. The following is due to
G.~Lunardon \cite[p.~307]{luna-84}: This design can be embedded in
$\PG(2^{h}-1,q)$ as an algebraic variety, say $\overline X$, called an
\emph{$h$-sphere\/}. Any three distinct points of $\overline X$ are
independent. Furthermore, all its chains span subspaces with a constant
dimension $\min\{q,h\}$. (The chains on the $h$-sphere are normal rational
curves; see \ref{:veronese} below.) Hence Theorem \ref{thm:existenz-neu} can be
applied to construct $3$-DDs from this embedded chain geometry. Observe that it
remains open from \cite{luna-84} whether or not $\overline X$ will always
generate $\PG(2^{h}-1,q)$.
\par
Each semilinear automorphism of this chain geometry extends to a collineation
of $\PG(2^{h}-1,q)$. The group of these collineations meets the conditions from
Theorem \ref{thm:spera-geht-auch}, whence one could also apply Spera's
construction to obtain the lifted $3$-DDs.
\par
We add in passing that for $h=2$ an $h$-sphere is just an elliptic quadric in
$\PG(3,q)$ and the associated design is a miquelian M{\"o}bius plane. Cf.\ also
\cite[pp.~48--50]{giese-05a}, where the case $h=2$, $c=1$, $q$ odd is treated
from a dual point of view.
\par
If we disregard the chains on the $h$-sphere then Corollary
\ref{cor:alternative} gives a $3$-DD with block size $q^h+1$.

\par
(e) Any generating set $\overline X$ of $\PG(d,q)$ yields a $2$-DD according to
Corollary \ref{cor:alternative}.
\end{exas}

\begin{nrtxt}\label{:veronese}
We proceed by showing that the assumptions of Corollary \ref{cor:alternative}
can be realized for each integer $t\geq 2$ if $\overline X$ is chosen as an
appropriate \emph{Veronese variety}.

Suppose that three integers $c,m\geq 1$, $t\geq 2$, and a finite field $\bF_q$
are given. We let $d={m+t-1\choose m}-1$ and consider the projective space
$\PG(d,q)$. Its $d+1$ coordinates will be indexed by the set $E_{m,t-1}$ of all
sequences $e=(e_0,e_1,\ldots,e_m)$ of non-negative integers satisfying
$e_0+e_1+ \cdots +e_m=t-1$; the coordinates are written in some fixed order.
The \emph{Veronese mapping\/} is given by
\begin{equation}\label{eq:veronese}
  v_{m,t-1}:\PG(m,q)\to \PG(d,q) : \bF_q(x_0,x_1,\ldots,x_m) \mapsto
  \bF_q(\ldots, y_{{e_0},{e_1},\ldots,{e_m}},\ldots),
\end{equation}
where $y_{{e_0},{e_1},\ldots,{e_m}}:=x_0^{e_0} x_1^{e_1} \cdots x_m^{e_m}$. Its
image is known as a \emph{Veronese variety\/} (or, for short a
\emph{Veronesean\/}) $\cV_{m,t-1}(q)$. A $\cV_{1,t-1}$ is also called a
\emph{normal rational curve}.
\par
There is a widespread literature on Veronese varieties. We refer to
\cite{herz-82} for a coordinate-free definition of the Veronese mapping which
allows to derive its essential properties in a very elegant way. See also
\cite{havl+z-97a}. The case of a finite ground field is presented in
\cite[Chapter~25]{hirs+t-91} for $t=3$, and in \cite{cossi+l+s-01} for
arbitrary $t$. Many references, in particular to the older literature (over the
real and complex numbers), can also be found in \cite{havl-03a}.

For the reader's convenience we present now two results together with their
short proofs. The first coincides with \cite[Corollary~2.6]{cossi+l+s-01}, the
second seems to be part of the folklore.
\end{nrtxt}

\begin{lem}\label{lem:vero-eigenschaften}
The following assertions hold:
\begin{enumerate}\engschreiben
\item\label{lem:spannt.auf}
The Veronesean $\cV_{m,t-1}(q)$ spans $\PG(d,q)$
if, and only if, $t \leq q+1$.

\item\label{lem:unabhaengig}
The Veronese mapping \emph{(\ref{eq:veronese})}
maps any $t\geq 2$ distinct points of\/ $\PG(m,q)$ to $t$ independent points
of\/ $\PG(d,q)$.
\end{enumerate}
\end{lem}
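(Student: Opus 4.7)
The plan is to establish (b) first by a direct interpolation argument, and then (a) via the duality between homogeneous polynomials of degree $t-1$ and linear forms on $\bF_q^{d+1}$.

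For (b), each $F=\sum_e c_e\,x_0^{e_0}\cdots x_m^{e_m}\in\bF_q[x_0,\ldots,x_m]$ homogeneous of degree $t-1$ defines, via the monomial coordinates, a linear form $\phi_F$ on $\bF_q^{d+1}$ satisfying $\phi_F(v_{m,t-1}(\bF_q(x_0,\ldots,x_m)))=F(x_0,\ldots,x_m)$. Given $t$ distinct points $P_1,\ldots,P_t\in\PG(m,q)$, for each index $i$ and each $j\ne i$ I would pick a linear form $L_{ij}$ vanishing at $P_j$ but not at $P_i$; this is possible because $P_i\ne P_j$, so some hyperplane through $P_j$ misses $P_i$. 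The product $F_i:=\prod_{j\ne i}L_{ij}$ is then homogeneous of degree $t-1$ with $F_i(P_i)\ne 0$ and $F_i(P_j)=0$ for $j\ne i$. An alleged linear dependence $\sum_j\lambda_j v_{m,t-1}(P_j)=0$ therefore yields $\lambda_i F_i(P_i)=0$, so $\lambda_i=0$ for every $i$.

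For (a), the Veronesean spans $\PG(d,q)$ if and only if no non-zero $F\in\bF_q[x_0,\ldots,x_m]_{t-1}$ vanishes identically on $\bF_q^{m+1}$. For $t\ge q+2$ I would exhibit the explicit witness $x_0^q x_1-x_0 x_1^q$, which vanishes on $\bF_q^{m+1}$ by Fermat's little theorem; multiplying by any monomial of degree $t-2-q\ge 0$ produces a non-zero homogeneous polynomial of degree $t-1$ with the same vanishing property, showing the Veronese fails to span. For the converse $t\le q+1$, I would invoke the standard fact that a polynomial over $\bF_q$ in which every variable has degree strictly less than $q$, and which vanishes on all of $\bF_q^{m+1}$, must be identically zero. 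When $t-1\le q-1$ this applies immediately to any non-zero $F$ at hand, giving the required contradiction.

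The delicate point---and the one I expect to be the main obstacle---is the borderline $t-1=q$, where the monomials $x_i^q$ of $F$ have a variable of degree $q$ and the standard fact cannot be applied directly. Here I would use the observation that the only degree-$q$ monomials possessing an exponent $\ge q$ are the $x_i^q$ themselves, since $|e|=q$ together with $e_i\ge q$ forces $e=q\,\ve_i$. Replacing each $x_i^q$ in $F$ by $x_i$ then produces a function-equivalent polynomial $\tilde F$ whose monomials are the unchanged mixed degree-$q$ terms of $F$ (each variable of degree $\le q-1$) together with the linear terms $x_i$ arising from the coefficients of $x_i^q$; these monomials are pairwise distinct and all fit into the ``every variable of degree $<q$'' regime. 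The standard fact therefore forces $\tilde F=0$ as a polynomial, hence $F=0$, completing (a).
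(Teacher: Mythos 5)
Your proof is correct, and it splits naturally into a part that matches the paper and a part that is genuinely different. For (b), your argument is essentially the paper's: both exploit the correspondence between linear forms on the ambient space of $\cV_{m,t-1}(q)$ and degree-$(t-1)$ forms on $\PG(m,q)$, and both separate the points by a product of $t-1$ linear forms; the paper phrases this as ``there is a hyperplane of $\PG(d,q)$ through the images of $p_1,\ldots,p_{t-1}$ missing the image of $p_t$'', while you package the same forms $F_i$ as a dual-basis argument killing the coefficients of a putative dependence --- a cosmetic difference only. For (a), the frame (spanning fails iff some non-zero form of degree $t-1$ vanishes on all points) is again the paper's, but where the paper simply cites Tallini's theorem that the minimal degree of a hypersurface containing all points of $\PG(m,q)$ is $q+1$, you prove the needed special case from scratch: the explicit witness $x_0^qx_1-x_0x_1^q$ (times a monomial) for $t\geq q+2$, uniqueness of reduced polynomials (all exponents $<q$) for $t-1\leq q-1$, and the careful borderline case $t-1=q$, where your observation that the only degree-$q$ monomials with an exponent $\geq q$ are the $x_i^q$, so that replacing $x_i^q$ by $x_i$ yields a non-zero function-equivalent reduced polynomial, is exactly the point that needs attention and is handled correctly (the new linear terms cannot collide with the degree-$q$ terms since $q\geq 2$). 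What your route buys is self-containedness and an elementary proof of precisely the statement needed, at the cost of losing the stronger information in Tallini's result; note also that your witness for $t\geq q+2$ uses two variables, i.e.\ the standing assumption $m\geq 1$ from the construction, which is indeed in force here.
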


\begin{proof}
Ad (a): Each family $(a_e)_{e\in E_{m,t-1}}$ with entries in $\bF_q$, but not
all zero, corresponds in $\PG(d,q)$ to a hyperplane, say $H$, with equation
$\sum_{e\in E_{m,t-1}}a_ey_e=0$, and in $\PG(m,q)$ to an algebraic
hypersurface, say $\cF$, with degree $t-1$ which is given by
\begin{equation*}
    \sum_{e\in E_{m,t-1}} a_{{e_0},{e_1},\ldots,{e_m}}x_0^{e_0}x_1^{e_1}\cdots
    x_m^{e_m}=0.
\end{equation*}
A point $p$ of $\PG(m,q)$ is in $\cF$ if, and only if, its Veronese image is in
$H$. Clearly, all hyperplanes of $\PG(d,q)$ and all hypersurfaces with degree
$t-1$ of $\PG(m,q)$ arise in this way.

By a result of G.~Tallini \cite[p.~433--434]{tallini-61a} there are
hypersurfaces of any degree $\geq q+1$ containing \emph{all\/} points of
$\PG(m,q)$, but no such hypersurfaces of degree less than $q+1$. By the above,
this means that $\cV_{m,t-1}(q)$ does not span $\PG(d,q)$ precisely when
$t-1\geq q+1$.
\par
Ad (b): Let $p_1, p_2,\ldots,p_t$ be $t\geq 2$ distinct points of $\PG(m,q)$.
Choose one of them, say $p_t$. There exist (not necessarily distinct)
hyperplanes $Z_i$ of $\PG(m,q)$, such that $p_i\in Z_i$ and $p_t\notin Z_i$ for
all $i\in\{1,2,\ldots,t-1\}$. If $\sum_j c_{ij}x_j=0$ are equations for the
$Z_i$s then $\prod_{i=1}^{t-1}(\sum_j c_{ij}x_j)=0$ gives a hypersurface $\cF$
of degree $t-1$ which contains $p_1,p_2,\ldots p_{t-1}$, but not $p_t$. We
infer from the the proof of (\ref{lem:spannt.auf}) that there is a hyperplane
$H$ of $\PG(d,q)$ which contains the Veronese images of $p_1,p_2,\ldots
p_{t-1}$, but not the image of $p_t$. Thus the image of $p_t$ is not in the
span of the remaining image points.
\end{proof}

\begin{thm}\label{thm:unendlich.viele}
For any integer $t\geq 2$ there exist infinitely many non-isomorphic
transversal $t$-divisible designs.
\end{thm}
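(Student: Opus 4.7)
The plan is to apply Corollary \ref{cor:alternative} with $\overline{X}$ chosen as a suitable Veronese variety, and then to let $c$ vary in order to produce an infinite family of mutually non-isomorphic transversal $t$-DDs.

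Fix $t\geq 2$. First I would pick a prime power $q$ with $q\geq t-1$, so that $t\leq q+1$. Taking $m:=1$ (for simplicity, though any $m\geq 1$ works) and $d:={m+t-1\choose m}-1=t-1$, I would set $\overline{X}:=\cV_{1,t-1}(q)$, the normal rational curve in $\PG(d,q)$. By Lemma \ref{lem:vero-eigenschaften}(\ref{lem:spannt.auf}), the condition $t\leq q+1$ guarantees that $\overline{X}$ generates $\PG(d,q)$, and by part (\ref{lem:unabhaengig}) any $t$ distinct points of $\overline{X}$ are independent. Setting $k:=\card\overline{X}=q+1\geq t$, all hypotheses of Corollary \ref{cor:alternative} are met.

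For each positive integer $c$, the corollary now yields a transversal $t$-$(q^c,k,q^{c(d-t+1)})$-divisible design on $q^c k=q^c(q+1)$ points. Since this number of points is strictly increasing with $c$ and hence unbounded, the $t$-DDs obtained for distinct values of $c$ have different cardinalities of their point sets, so they are pairwise non-isomorphic. This produces infinitely many non-isomorphic transversal $t$-DDs for the prescribed $t$.

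There is really no serious obstacle: the work has been done in Lemma \ref{lem:vero-eigenschaften} and Corollary \ref{cor:alternative}. The only subtlety is the parameter bookkeeping, namely ensuring that $q\geq t-1$ so that the Veronesean spans and the independence statement applies; once that is secured, the non-isomorphism of the members of the family follows from the trivial cardinality invariant of the point set.
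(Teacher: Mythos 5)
Your proposal is correct and follows essentially the same route as the paper: choose a Veronesean (you specialize to $m=1$, the normal rational curve, which is fine since any $m\geq 1$ works), invoke Lemma \ref{lem:vero-eigenschaften} to verify the hypotheses of Corollary \ref{cor:alternative}, and let $c$ vary. Your explicit remark that the designs for distinct $c$ are non-isomorphic because their point sets have different cardinalities is exactly the (implicit) argument in the paper.
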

\begin{proof}
Fix any $t\geq 2$ and choose any integer $m\geq 1$. There is a prime power $q$
such that $t\leq q+1$. The Veronesean $\cV_{m,t-1}$ has
$k:=q^m+q^{m-1}+\cdots+1\geq q+1\geq t$ points, and it spans $\PG(d,q)$ by
Lemma~\ref{lem:vero-eigenschaften}~(\ref{lem:spannt.auf}). We read off from
Lemma~\ref{lem:vero-eigenschaften}~(\ref{lem:unabhaengig}) that any $t$ points
of $\cV_{m,t-1}=:\overline X$ are independent. So the assumptions of Corollary
\ref{cor:alternative} are satisfied. As $c$ runs in the set of non-negative
integers, we obtain infinitely many non-isomorphic transversal
$t$-$(q^c,k,q^{c(d-t+1)})$-DDs.
\end{proof}

\begin{txt}
Letting $m=c=1$ in the above proof yields a DD which is contained in a cone
with a one-point vertex over a normal rational curve $\cV_{1,t-1}$ in
$\PG(t-1,q)$. These DDs are finite analogues of \emph{tubular circle planes\/}
\cite[p.~398]{polster+s-01}. We refer also to \cite{cerr+sch-01a} (dual point
of view) and \cite{giese+h+s-05a} for the case when $m=c=1$ and $t=3$.

An alternative proof of Theorem \ref{thm:unendlich.viele} is provided by the
construction from Theorem \ref{thm:geht-immer}. One may start there with a
trivial $t$-DD with point set $\overline X:=\{1,2,\ldots,\overline v\}$,
$\overline\cB:=\{\overline X\}$, and the diagonal relation as $\qrel$. Then, as
$w$ varies in the set of non-negative integers, infinitely many non-isomorphic
$t$-DDs are obtained. However, this approach gives trivial $t$-DDs, because
\emph{every\/} $\rel$-transversal $\overline v$-subset of such a $t$-DD turns
out to be a block. The DDs which arise from the proof of
\ref{thm:unendlich.viele} are trivial if, and only if, the Veronesean
$\cV_{m,t-1}$ is a basis of $\PG(d,q)$, i.e.\ for $k=d+1$.

In the previous proof we could also choose $\overline X$ to be a
\emph{subset\/} of $\cV_{m-1,t}$ with at least $t$ elements. This would also
give a $t$-DD by applying the construction of Corollary \ref{cor:alternative}
to the subspace generated by $\overline X$. We confine our attention to one
particular case.
\end{txt}

\begin{exa}
In $\PG(d,q)$, i.e.\ the ambient space of the Veronesean $\cV_{m,t-1}$, let us
arrange the coordinates in such a way that the first $m+1$ coordinates belong
to the sequences
\begin{equation*}
  (t-1,0,0,\ldots 0), (t-2,1,0,\ldots 0), \ldots, (t-2,0,\ldots,0,1) \in E_{m,t-1}.
\end{equation*}
The order of the remaining coordinates is immaterial. As before, we embed
$\PG(m,q)$ via the Veronese mapping (\ref{eq:veronese}) in $\PG(d,q)$, and then
$\PG(d,q)$ in $\PG(n,q)$ via the canonical embedding (cf.\ the proof of Theorem
\ref{thm:existenz-neu}). Furthermore, we turn $\PG(m,q)$ into an affine space
by considering $x_0=0$ as its \emph{hyperplane at infinity}. The Veronese image
of an affine point $\bF_q(1,x_1,x_2,\ldots x_m)$ is
\begin{equation*}
   \bF_q(1,x_1,x_2,\ldots x_m, \underbrace{*,\ldots,*}_{d-m},
                               \underbrace{0,0,\ldots,0}_{c}).
\end{equation*}
Here the entries marked with an asterisk are polynomials in
$x_1,x_2,\ldots,x_m$. Let $\overline X$ be the set of all such points.

The minimum degree of a hypersurface in $\AG(m,q)$ containing \emph{all} points
of $\AG(m,q)$ is $q$. The proof is similar to the one for the projective case
\cite{tallini-61a}. So, provided that $t\leq q$, the set $\overline X$ spans
$\PG(d,q)$; see also Lemma~\ref{lem:vero-eigenschaften}~(\ref{lem:spannt.auf}).
Hence, for $t\leq q$ we obtain a $t$-$(q^c,q^m,q^{c(d-t+1)})$-DD by applying
Corollary \ref{cor:alternative}.

The action of $G$ on $X=\overline X^G$ is as follows: Any matrix
$g:=\SMat2{I_{d+1}& M\\ 0 & I_c}$ as in (\ref{eq:matrix}) takes
\begin{equation}\label{eq:G-vero-aff-ur}
  \bF_q(1,x_1,x_2,\ldots
  x_m,\underbrace{*,\ldots,*}_{d-m},y_1,y_2,\ldots,y_c),
\end{equation}
to
\begin{equation}\label{eq:G-vero-aff-bild}
  \bF_q(1,x_1,x_2,\ldots
  x_m,\underbrace{*,\ldots,*}_{d-m},y_1+P_1,y_2+P_2,\ldots,y_c+P_c),
\end{equation}
where each $P_j$, $j\in\{1,2,\ldots,c\}$, denotes a polynomial in
$x_1,x_2,\ldots,x_m$ with degree $\leq t-1$. The coefficients of $P_j$ are the
entries in the $j$th column of $M$.
\par

However, this DD admits an alternative description which avoids Veroneseans and
projective spaces.  We simply delete the block of $d-m$ coordinates and go over
to inhomogeneous coordinates in (\ref{eq:G-vero-aff-ur}) and
(\ref{eq:G-vero-aff-bild}). This amounts to applying a projection which maps
$X$ bijectively onto $\AG(m+c,q)$. We use this bijection to obtain an
isomorphic DD and an isomorphic action of the group $G$ on $\AG(m+c,q)$. It is
given by
\begin{equation*}
  (x_1,x_2,\ldots x_m,y_1,y_2,\ldots,y_c)\stackrel g\longmapsto
  (x_1,x_2,\ldots x_m,y_1+P_1,y_2+P_2,\ldots,y_c+P_c).
\end{equation*}
Hence the blocks of $\AG(m+c,q)$ are precisely the graphs of all the $c$-tuples
of polynomial functions $\bF_q^m\to \bF_q$ with degree $\leq t-1$, whereas the
point classes are the cosets of the subspace $x_1=x_2=\cdots= x_m=0$ in
$\bF_q^{m+c}$. In particular, when $m=c=1$ then the unique block through an
$\rel$-transversal $t$-subset of $\AG(2,q)$ is just the graph of the polynomial
function with degree $\leq t-1$ which is obtained by the interpolation formula
of Lagrange. Compare with \cite[p.~399--400]{polster+s-01} for similar results
over the real numbers. See also \cite{luksch-87} for a detailed investigation
of this ``geometry of polynomials''.

\end{exa}

\begin{exa}\label{ex:liftbar}
Let $(\overline X,\overline\cB,\qrel)$ be any $t$-DD with $\overline v$ points,
$t\geq 2$. There is a prime power $q$ such that $q+1\geq \overline v\geq t$. We
consider the normal rational curve $\cV_{1,t-1}$ in $\PG(t-1,q)$; it has $q+1$
points. So we can identify $\overline X$ with a subset of $\cV_{1,t-1}$. Now it
is easy to verify the conditions from Theorem \ref{thm:existenz-neu}, because
any $t$ distinct points of $\overline X$ form a basis of $\PG(t-1,q)$.

When $t=2$ then $\cV_{1,t-1}=\PG(1,q)$ is a projective line. In this particular
case the result can be found in \cite[Bemerkung 3.2.2]{giese-05a}.
\end{exa}

\begin{exa}
Let $\cC$ be a $[\nu,\kappa]$-linear code on $\mathbb{F}_q$ of minimum weight
$t+1\geq 3$. It is well known (cf. for example \cite{cecch+t-81a}) that $\cC$ is
associated with a $\nu$-set, say $\overline X$, of points in $\PG({\nu-\kappa-1},q)$,
such that every $t$-subset of $\overline X$ is independent and there exists a dependent
$(t+1)$-subset of $\overline X$. By Corollary \ref{cor:alternative}, for each $c\geq 1$
we obtain a transversal $t$-$(q^c,\nu,q^{c(\nu-\kappa-t)})$-DD.

On the other hand, each $t$-DD determines a constant weight code. See
\cite{schulz+s-00} and the references given there. Thus, according to our
construction, we can link two concepts from coding theory and it would be
interesting to know more about this connection.
\end{exa}

\begin{nrtxt}
In order to apply the construction of DDs according to Theorem
\ref{thm:existenz-neu} with an appropriate $t$ one could also embed a given DD
in an arc, an oval, a hyperoval, an ovoid, a cap of kind $t-1$ (any $t$ points
are independent), etc. Thus many more DDs can be constructed.
\par
The group $G$ used in the proof of Theorem \ref{thm:existenz-neu} is elementary
abelian and it yields a so-called \emph{dual translation group} of the lifted
DD. See \cite[Chapter~5]{giese-05a}, where characterizations of DDs admitting
such a group can also be found.
\par
Another promising setting for a $3$-lifting (according to Theorem
\ref{thm:lifting}) could be to use the projective line over a finite (not
necessarily commutative) local ring as $X$, and a suitable subgroup of the
general linear group $\GL_2(R)$ as $G$. Such a group need not be elementary
abelian. Here some overlap with the work of Spera \cite{spera-95}, who
considered the projective line over a finite local algebra and the full group
$\GL_2(R)$, is to be expected.
\end{nrtxt}

\footnotesize


Author's addresses:

\par
Andrea Blunck, Fachbereich Mathematik, Universit{\"a}t Hamburg, Bundesstra{\ss}e 55,
D-20146 Hamburg, Germany.\\ \mbox{\tt andrea.blunck@math.uni-hamburg.de}
\par
Hans Havlicek, Institut f{\"u}r Diskrete Mathematik und Geometrie, Technische
Universit{\"a}t Wien, Wiedner Hauptstra{\ss}e 8--10, A-1040 Wien, Austria.\\ \mbox{\tt
havlicek@geometrie.tuwien.ac.at}
\par
Corrado Zanella, Dipartimento di Tecnica e Gestione dei Sistemi Industriali,
Universit{\`a} di Padova, Stradella S. Nicola, 3, I-36100 Vicenza, Italy.\\
\mbox{\tt corrado.zanella@unipd.it}

\end{document}